\newtheorem{theorem}{Theorem}[section]
\newtheorem{lemma}[theorem]{Lemma}
\newtheorem{proposition}[theorem]{Proposition}
\newtheorem{corollary}[theorem]{Corollary}
\theoremstyle{definition}
\patchcmd{\@endtheorem}{\@endpefalse}{}{}{}
\patchcmd{\endproof}{\@endpefalse}{}{}{}
\DeclareMathOperator{\Hom}{Hom}
\DeclareMathOperator{\Gal}{Gal}
\DeclareMathOperator{\Spec}{Spec}
\DeclareMathOperator{\Ext}{Ext}
\newcommand\Z{\ensuremath{\mathbf{Z}}}
\newcommand\Q{\ensuremath{\mathbf{Q}}}
\newcommand\F{\ensuremath{\mathbf{F}}}
\newcommand\G{\ensuremath{\mathbb{G}}}
\newcommand{\Alg}[1]{\overline{#1}}
\newcommand{\bad}{\pi}
\newcommand{\gsc}{J}
\newcommand{\spacedot}{\, \cdot \,}
\DeclareMathOperator{\Class}{Cl}
\begin{document}
\title{Extensions of group schemes of $\mu$-type by a constant group scheme}
\author{Hendrik Verhoek}
\begin{abstract}
For a number field $K$, a finite set of primes $S$ not 
containing a fixed prime $p$,
we explain when extensions of group schemes of $\mu_p$ by $\Z/p\Z$ split
over the ring of $S$-integers $O_S$ of $K$.
\end{abstract}
\maketitle

\tableofcontents

\section{Introduction} \label{sec:intro}

Let $p$ be a rational prime and $K$ a number field.
Let $S$ be a finite set of primes in $K$ that does not contain primes above $p$.
Let $\pi$ be a prime ideal above $p$ in $O_S$ and 
let $\widehat{O_S}$ be the completion of $O_S$ at $\pi$.
Denote by $\Ext^1_{O_S}(\mu_p,\Z/p\Z)$ the group of equivalence classes of extensions of $\mu_p$ by the constant
group scheme $\Z/p\Z$ in the category of finite flat commutative group schemes over $O_S$.
Our main goal is to calculate the group $\Ext^1_{O_S}(\mu_p,\Z/p\Z)$:

\begin{theorem} \label{thm:main}
Suppose $p$ does not split in $K/\Q$.
Let $\widehat{L} = \widehat{O_S}[\zeta_p/p]$
and $\omega : \Gal(\Q(\zeta_p)/\Q) \rightarrow \F_p^*$ be the cyclotomic character at $p$.
Suppose that the ${\omega^2}$-eigenspace of the
$p$-torsion of the class group of $O_S[\zeta_p/p]$ is trivial. Then
$$
\Ext^1_{O_S}(\mu_p,\Z/p\Z) \simeq_{\F_p}
\ker \left( (O_S[\zeta_p/p]^*/(O_S[\zeta_p/p]^{*})^p)_{\omega^2}
\longrightarrow (\widehat{L}^*/(\widehat{L}^{*})^p)_{\omega^2} \right) .
$$
\end{theorem}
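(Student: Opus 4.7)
The plan is to reduce $\Ext^1_{O_S}(\mu_p, \Z/p\Z)$ to a Kummer-theoretic computation over $R := O_S[\zeta_p/p]$ via three steps: a global-local exact sequence comparing $O_S$ with $O_S[1/p]$ and $\widehat{O_S}$; vanishing of the Ext group over the local ring $\widehat{O_S}$; and Galois descent combined with Kummer theory, with the class-group hypothesis correcting for the Picard obstruction. The assumption that $p$ does not split in $K/\Q$ guarantees both that $\pi$ is the unique prime of $O_S$ above $p$ and that $R/O_S[1/p]$ and $\widehat{L}/\widehat{O_S}[1/p]$ are Galois extensions with group $\Gal(\Q(\zeta_p)/\Q)$ of order $p-1$ coprime to $p$.

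First I would set up the Mayer-Vietoris sequence for the Beauville-Laszlo square $O_S = O_S[1/p] \times_{\widehat{O_S}[1/p]} \widehat{O_S}$:
\begin{equation*}
0 \longrightarrow \Ext^1_{O_S}(\mu_p, \Z/p\Z) \longrightarrow \Ext^1_{O_S[1/p]}(\mu_p, \Z/p\Z) \oplus \Ext^1_{\widehat{O_S}}(\mu_p, \Z/p\Z) \longrightarrow \Ext^1_{\widehat{O_S}[1/p]}(\mu_p, \Z/p\Z) ,
\end{equation*}
encoding the fact that a finite flat group scheme over $O_S$ is determined by its generic fiber together with its $\pi$-adic completion. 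Next I would show $\Ext^1_{\widehat{O_S}}(\mu_p, \Z/p\Z) = 0$: the connected component $G^0$ of any extension $G$ maps injectively to $\mu_p$ (since the étale subgroup $\Z/p\Z$ has trivial intersection with $G^0$), and since $\mu_p$ itself is connected of order $p$ over the residue-characteristic-$p$ base, $G^0 \to \mu_p$ is an isomorphism, producing a section and splitting the extension. The Mayer-Vietoris sequence therefore collapses to
\begin{equation*}
\Ext^1_{O_S}(\mu_p, \Z/p\Z) \simeq \ker\bigl( \Ext^1_{O_S[1/p]}(\mu_p, \Z/p\Z) \longrightarrow \Ext^1_{\widehat{O_S}[1/p]}(\mu_p, \Z/p\Z) \bigr) .
\end{equation*}

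The third and main step is to identify these two Ext groups with the stated Kummer eigenspaces. Since $\mu_p$ and $\Z/p\Z$ become étale after inverting $p$, $\Ext^1_{O_S[1/p]}(\mu_p, \Z/p\Z) = H^1_{\mathrm{et}}(O_S[1/p], \F_p(-1))$ with $\mu_p = \F_p(1)$, and similarly over $\widehat{O_S}[1/p]$. The Hochschild-Serre spectral sequence for $R/O_S[1/p]$ collapses to $H^1(O_S[1/p], \F_p(-1)) \simeq H^1(R, \F_p(-1))^{\Gal(\Q(\zeta_p)/\Q)}$. Trivializing $\mu_p$ over $R$ via $\zeta_p$ identifies $H^1(R, \F_p(-1))$ with $H^1(R, \mu_p)$ as abelian groups, but with Galois action twisted by $\omega^{-2}$: one $\omega^{-1}$ from the coefficient twist $\F_p(-1)$ relative to $\F_p$, and another $\omega^{-1}$ from the Kummer trivialization $\mu_p \simeq \F_p$. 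Taking $\Gal$-invariants on the twisted side therefore selects the $\omega^2$-eigenspace on the Kummer side; decomposing the Kummer sequence
\begin{equation*}
0 \longrightarrow R^*/(R^*)^p \longrightarrow H^1(R, \mu_p) \longrightarrow \Class(R)[p] \longrightarrow 0
\end{equation*}
into eigenspaces and invoking the hypothesis $\Class(R)[p]_{\omega^2} = 0$ gives $\Ext^1_{O_S[1/p]}(\mu_p, \Z/p\Z) \simeq (R^*/(R^*)^p)_{\omega^2}$. The analogous descent over $\widehat{L}/\widehat{O_S}[1/p]$ yields $\Ext^1_{\widehat{O_S}[1/p]}(\mu_p, \Z/p\Z) \simeq (\widehat{L}^*/(\widehat{L}^*)^p)_{\omega^2}$ with no class-group obstruction, since $\widehat{L}$ is a local field; the comparison map is induced by $R \hookrightarrow \widehat{L}$, and combining with the second step produces the stated kernel description. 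The main obstacle I anticipate is establishing the Mayer-Vietoris sequence rigorously for extensions of finite flat group schemes (as opposed to the more familiar fppf cohomology with torsion coefficients) and carefully tracking the two Tate twists to land on the $\omega^2$-eigenspace rather than some other character; the connected-étale vanishing and the class-group reduction are comparatively routine once this eigenspace bookkeeping is in place.
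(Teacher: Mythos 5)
Your proposal is correct and follows essentially the same route as the paper: the Mayer--Vietoris sequence of Schoof (Lemma \ref{prop:mayervietoris}), the connected--\'etale splitting over $\widehat{O_S}$, and reduction to the $\omega^2$-eigenspace of Kummer theory over $O_S[\zeta_p/p]$ using the class-group hypothesis; your Hochschild--Serre/Tate-twist bookkeeping is the paper's Proposition \ref{prop:grpext_twist} and Lemma \ref{lem:ext_iso_units} in different clothing. The only detail you elide is that exactness of your Mayer--Vietoris sequence at $\Ext^1_{O_S}(\mu_p,\Z/p\Z)$ requires surjectivity of $\Hom_{O_S[1/p]}(\mu_p,\Z/p\Z)\times\Hom_{\widehat{O_S}}(\mu_p,\Z/p\Z)\rightarrow\Hom_{\widehat{O_S}[1/p]}(\mu_p,\Z/p\Z)$, which the paper supplies as Lemma \ref{lem:hom_mup_const} using the assumption that $p$ does not split in $K/\Q$.
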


Finite flat commutative group schemes of $p$-power order
over a base where $p$ is invertible,
are \'etale group schemes and therefore just Galois modules.
Therefore we will consider in Section \ref{sec:modules}
extensions of modules with a group action.
In Section \ref{sec:groupschemes}
we move on to extensions of the finite flat group schemes
$\Z/p\Z$ by $\mu_p$ that are killed by $p$ and prove Theorem \ref{thm:main}.
Finally, we calculate for various $K$ and $S$ the group $\Ext^1_{O_S}(\mu_p,\Z/p\Z)$
using Theorem \ref{thm:main}.

\section{Extensions of modules} \label{sec:modules}

Let $R$ be a commutative unitary ring such that $p\cdot R=0$ 
and let $G$ be a group.
When we say $R$-module, we mean a left $R$-module.
We will consider extensions of $R$-modules with an action of $G$, as
a preparation for the next section,
where we will discuss extensions of finite flat group schemes.
We will use the following theorem of Grothendieck:

\begin{theorem} \label{thm:spectral_low_degree}
Let $C_1,C_2$ and $C_3$ be abelian categories, such that $C_1$ and $C_2$ have enough injectives.
Let $F_1 : C_1 \rightarrow C_2$ be
a left exact functor that maps injective objects in $C_1$
to acyclic objects in $C_2$ and let $F_2 : C_2 \rightarrow C_3$ be a left exact functor.
Then there is an exact sequence of low degree terms:
\begin{align*}
0 \longrightarrow & (R^1 F_2)(F_1(A)) \longrightarrow (R^1 (F_2 F_1))(A) \longrightarrow \\
& F_2((R^1 F_1)(A)) \longrightarrow 
(R^2 F_2)(F_1(A)) \longrightarrow  (R^2 (F_2 F_1))(A)
\end{align*}
\end{theorem}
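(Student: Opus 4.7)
The plan is to derive the sequence from the Grothendieck composite-functor spectral sequence
$$
E_2^{p,q} = (R^p F_2)(R^q F_1)(A) \;\Longrightarrow\; R^{p+q}(F_2 \circ F_1)(A),
$$
of which the claimed five-term sequence is the standard sequence of low-degree terms.

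First I would construct the spectral sequence. Choose an injective resolution $A \to I^\bullet$ in $C_1$ and apply $F_1$ to obtain a cochain complex $F_1(I^\bullet)$ in $C_2$, whose cohomology objects are by definition $(R^q F_1)(A)$; by hypothesis each $F_1(I^n)$ is $F_2$-acyclic. Next take a Cartan--Eilenberg resolution $F_1(I^\bullet) \to J^{\bullet,\bullet}$ by injectives in $C_2$, which exists since $C_2$ has enough injectives, and apply $F_2$ to form a first-quadrant double complex $F_2(J^{\bullet,\bullet})$. The two spectral sequences of this double complex yield the assertion: filtering by rows, the acyclicity of $F_1(I^n)$ forces degeneration at $E_1$ and identifies the abutment with the cohomology of $F_2(F_1(I^\bullet))$, namely $R^{p+q}(F_2 F_1)(A)$; filtering by columns, the Cartan--Eilenberg formalism identifies the $E_2$-page with $(R^p F_2)(R^q F_1)(A)$.

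Then I would extract the five-term sequence from the abutment filtration $F^\bullet R^n(F_2 F_1)(A)$. Since the spectral sequence is first-quadrant, $F^2 R^1 = 0$, so there is a short exact sequence $0 \to E_\infty^{1,0} \to R^1(F_2 F_1)(A) \to E_\infty^{0,1} \to 0$. All differentials into or out of $E_r^{1,0}$ land in groups with a negative coordinate, hence $E_\infty^{1,0} = E_2^{1,0} = (R^1 F_2)(F_1(A))$. The only nontrivial differential leaving $E_r^{0,1}$ is $d_2 \colon E_2^{0,1} \to E_2^{2,0}$, so $E_\infty^{0,1} = \ker d_2$. Combining this with the inclusion $\ker d_2 \hookrightarrow E_2^{0,1}$, the map $d_2$ itself, and the edge injection $E_2^{2,0}/\im d_2 = E_\infty^{2,0} \hookrightarrow R^2(F_2 F_1)(A)$ produces the claimed exact sequence.

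The only genuinely substantive step is the construction and identification of the Cartan--Eilenberg resolution together with the two spectral sequences it carries; here the acyclicity hypothesis on $F_1$-images of injectives is exactly what is needed for the row-filtration spectral sequence to abut to $R^{p+q}(F_2 F_1)(A)$. Once this is in place, the five-term sequence is a purely formal consequence of the first-quadrant vanishing of $E_2^{p,q}$. I would therefore treat the spectral-sequence construction as the main point requiring care and the extraction of the low-degree sequence as bookkeeping.
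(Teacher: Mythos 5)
Your argument is correct and is precisely the content of the reference the paper invokes: the paper's ``proof'' is simply a citation to Weibel's Theorem 5.8.3 (the Grothendieck composite-functor spectral sequence), and your construction via a Cartan--Eilenberg resolution of $F_1(I^\bullet)$, the degeneration forced by $F_2$-acyclicity of the $F_1(I^n)$, and the formal extraction of the five-term exact sequence of low-degree terms is exactly the standard proof of that cited result. Nothing is missing; you have just supplied the details the paper outsources.
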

\begin{proof}
See \cite[Theorem 5.8.3, p. 151]{Weibel:1994}.
\end{proof}

Let $A$ and $B$ be two $R[G]$-modules such that $G$ acts trivially on $B$ and such
that $H$ acts trivially on $A$.
Let $\chi : G \rightarrow (\Z/p\Z)^*$ and suppose that $H$ is contained in $\ker(\chi)$.
Denote by $B(\chi)$ the $G$-module that has underlying group structure the one of 
$B$ and where the $G$-action is given by $\sigma b := \chi(\sigma) b$ for all $\sigma \in G$ and all $b \in B$.
We let $G$ act on $\Hom_{H}(B,A)$ through
the action of $G$ on $A$.
Denote by $\Hom_{H}(B,A)_{\chi}$ the subgroup of 
$\Hom_{H}(B,A)$ on which $\Gamma = G/H$ acts through $\chi$.

\begin{lemma} \label{lem:hom_twist}
We have the following isomorphisms of groups:
$$
\Hom_{G}(B(\chi),A)
\simeq 
\Hom_{H}(B,A)_{\chi} .
$$
\end{lemma}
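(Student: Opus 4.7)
The plan is to exhibit the isomorphism as the identity on underlying $R$-linear maps and check that $G$-equivariance for the twisted action on $B(\chi)$ translates exactly into the $\chi$-eigenspace condition for the induced $\Gamma$-action on $\Hom_H(B,A)$.

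Since $G$ acts trivially on $B$, the twisted action on $B(\chi)$ is simply $\sigma \cdot_\chi b = \chi(\sigma) b$, with $\chi(\sigma) \in \F_p^* \subseteq R^*$ acting via the $R$-module structure; the hypothesis $pR = 0$ is what makes this interpretation meaningful. For any $R$-linear map $f: B \to A$, the equivariance condition for the twisted action thus reads
$$
f(\sigma \cdot_\chi b) \;=\; f(\chi(\sigma) b) \;=\; \chi(\sigma) f(b) \;\stackrel{?}{=}\; \sigma f(b)
$$
for all $\sigma \in G$ and $b \in B$, where the middle equality uses $R$-linearity of $f$. Comparing with the prescribed $G$-action $(\sigma f)(b) = \sigma f(b)$ on $\Hom_H(B,A)$, the outer equation is exactly $\sigma f = \chi(\sigma) f$.

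For $\sigma \in H$, both sides coincide automatically: $\chi|_H = 1$ because $H \subseteq \ker(\chi)$, and $H$ acts trivially on $A$ by hypothesis. So the condition is vacuous on $H$, and it descends to a condition on $\Gamma = G/H$; this is precisely membership in $\Hom_H(B,A)_\chi$. The map $\Phi: \Hom_G(B(\chi),A) \to \Hom_H(B,A)_\chi$ sending $f$ to itself (viewed as $H$-equivariant) is therefore well-defined, and reversing the chain of equalities shows that any element of $\Hom_H(B,A)_\chi$, viewed as a map $B(\chi) \to A$, is $G$-equivariant for the twisted action. These two constructions are manifestly mutually inverse.

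I do not anticipate any genuine obstacle. The only point that demands care is that $\chi(\sigma) \in \F_p^*$ be identified with an element of $R^*$ that acts compatibly on both $A$ and $B$; this is automatic given the standing assumption $pR = 0$, which makes $R$ a $\F_p$-algebra. Everything else is a direct unwinding of the definitions of the twist $B(\chi)$, of the $G$-action on $\Hom_H(B,A)$, and of the $\chi$-eigenspace.
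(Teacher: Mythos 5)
Your proof is correct and takes essentially the same route as the paper's: the paper realizes the isomorphism as precomposition with an $H$-linear isomorphism $\psi : B \to B(\chi)$, which for $\psi = \mathrm{id}$ is exactly your identification, and the equivariance computation is the same chain of equalities unwinding the twisted action against the $\chi$-eigenspace condition.
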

\begin{proof}
Let $\psi : B \rightarrow B(\chi)$ be an $H$-linear isomorphism
and let 
$\phi : \Hom_{G}(B(\chi),A) \rightarrow \Hom_{H}(B,A)_{\chi}$
such that $f \mapsto f \circ \psi$.
The morphism $f \circ \psi$ is indeed an element in $\Hom_{H}(B,A)_{\chi}$
because for all $b$ in $B$ and $\sigma$ in $G$ the following equalities hold:
\begin{align*}
(\sigma (f \circ \psi))(b) = & \sigma((f \circ \psi)(b)) \\
= & f (\sigma (\psi(b))) = f (\chi(\sigma) \psi(b)) \\
= & \chi(\sigma) f (\psi(b)) .
\end{align*}
Note that the second equality follows because $f$ is $G$-linear.
Next we prove that the inverse morphism of $\phi$ is just precomposing with $\psi^{-1}$.
For $g \in \Hom_{H}(B,A)_{\chi}$
we have $g \circ \psi^{-1} \in \Hom_{G}(B(\chi),A)$, because
for all $b$ in $B(\chi)$ and all $\sigma$ in $G$:
\begin{align*}
(\sigma (g \circ \psi^{-1}))(b) = & \chi(\sigma) ((g \circ \psi^{-1})(b)) \\
= & (g \circ \psi^{-1})( \chi(\sigma) b) \\
= & (g \circ \psi^{-1})( \sigma \cdot b) . 
\end{align*} 
\end{proof}

\begin{proposition} \label{prop:grpext_twist}
Let $A$ and $B$ be two $R[G]$-modules such that $G$
acts trivial on $B$ and such that $H$ acts trivial on $A$.
Then
$$
\Ext_{H}^{1}(B,A)_{\chi}  \simeq \Ext_{G}^{1}(B(\chi),A) 
$$
as $R$-modules.
\end{proposition}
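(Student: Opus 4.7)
The plan is to apply the Grothendieck spectral sequence (Theorem \ref{thm:spectral_low_degree}) to the factorization
$$
\Hom_G(B(\chi), -) \;=\; (-)_\chi \;\circ\; \Hom_H(B, -),
$$
which is the functorial content of Lemma \ref{lem:hom_twist}. With $H \triangleleft G$ and $\Gamma = G/H$, I would set $F_1 = \Hom_H(B, -) \colon R[G]\text{-mod} \to R[\Gamma]\text{-mod}$ and $F_2 = (-)_\chi \colon R[\Gamma]\text{-mod} \to R\text{-mod}$, giving $R^i F_1(A) = \Ext^i_H(B, A)$ with its natural $\Gamma$-action coming from the $G$-action on $A$, and $R^i(F_2 \circ F_1)(A) = \Ext^i_G(B(\chi), A)$.

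The simplification to exploit is that $F_2$ is exact. Since $\chi$ lands in $\F_p^*$, its image has order dividing $p-1$, which is coprime to $\Char R = p$; assuming (as in the intended applications) that $\Gamma$ itself is finite of order coprime to $p$, the element
$$
e_\chi \;=\; \frac{1}{|\Gamma|} \sum_{\gamma \in \Gamma} \chi(\gamma)^{-1} \gamma \;\in\; R[\Gamma]
$$
is a well-defined idempotent whose image functorially cuts out $N_\chi$ as a direct summand of each $R[\Gamma]$-module $N$. Hence $R^i F_2 = 0$ for $i \ge 1$, every $R[\Gamma]$-module is $F_2$-acyclic, and Grothendieck's hypothesis that $F_1$ sends injectives to $F_2$-acyclics is satisfied automatically.

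With $R^1 F_2 = R^2 F_2 = 0$ across the board, the five-term low degree exact sequence of Theorem \ref{thm:spectral_low_degree} collapses: its outer two terms vanish and one is left with
$$
0 \longrightarrow \Ext^1_G(B(\chi), A) \longrightarrow F_2\bigl(\Ext^1_H(B, A)\bigr) \longrightarrow 0,
$$
i.e.\ the asserted $R$-module isomorphism, $R$-linearity being inherited from the $R$-linearity of $F_1$ and $F_2$. The one delicate point is the verification of exactness of $F_2$: this rests on the implicit hypothesis that $|\Gamma|$ be invertible in $R$, without which one would have to analyze $R^1 F_2(\Hom_H(B,A))$ and $R^2 F_2(\Hom_H(B,A))$ directly, a significantly less clean task since $(-)_\chi$ is not in general exact on $R[\Gamma]$-modules when $p$ divides $|\Gamma|$.
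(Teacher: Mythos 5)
Your proof is correct and runs on the same engine as the paper's: Theorem \ref{thm:spectral_low_degree} applied to a factorization of $\Hom_G(B(\chi),\,\cdot\,)$ through $\Hom_H$ followed by the $\chi$-eigenspace functor, with the five-term sequence collapsing because the eigenspace functor is exact. The execution differs in a way worth noting. You use the covariant functors $\Hom_H(B,-)$ and $(-)_\chi$ and apply the theorem once, identifying the composite directly with $\Hom_G(B(\chi),-)$ evaluated at $A$; the paper instead works with the contravariant functors $F_1=\Hom_H(-,A)$ and $T_2=\Hom_G(-,A)$, applies the theorem twice (once to $F_2\circ F_1$ with $F_2$ the eigenspace functor, once to $T_2\circ T_1$ with $T_1$ the twist by $\chi$), and glues the two outputs via the natural isomorphism $F F_2 F_1\simeq T_2 T_1$ supplied by Lemma \ref{lem:hom_twist}. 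Your single-composite version is cleaner and avoids invoking a theorem stated for covariant functors between categories with enough injectives on contravariant ones. Your caveat about exactness is also on target: the paper asserts without comment that taking $\chi$-eigenspaces is exact, and this genuinely requires $|\Gamma|$ to be invertible in $R$ (so that your idempotent $e_\chi$ exists); for an arbitrary normal $H\subseteq\ker(\chi)$ this can fail, but it holds in all of the paper's applications, where $\Gamma=\Gal(F(\zeta_p)/F)$ has order dividing $p-1$. The one point you share with the paper in passing over quickly is the identification $R^iF_1(A)=\Ext^i_H(B,A)$, which needs injective $R[G]$-modules to restrict to $\Hom_H(B,-)$-acyclic $R[H]$-modules; this is standard since $R[G]$ is free over $R[H]$, so restriction preserves injectives.
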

\begin{proof}
We consider the following two functors:
The left exact functor $F_1( \spacedot )=\Hom_H( \spacedot \, ,A)$
from $R[G]$-modules to $R[\Gamma]$-modules
and the exact functor $F_2$ ``taking $\chi$-eigenspaces''
from the category of $R[\Gamma]$-modules to
$R[\Gamma]$-modules.
With these two functors $F_1$ and $F_2$, we apply Theorem \ref{thm:spectral_low_degree}.
Since $F_2$ is exact, the functors $F_1$ and $F_2$ give rise to the exact sequence
\begin{align*}
0 & \longrightarrow (R^{1}F_2)(\Hom_{H}(B,A)) \longrightarrow 
R^1( F_2 F_1 )(B) \\
& \longrightarrow \Ext^{1}_{H}(B,A)_{\chi} \longrightarrow 
(R^{2} F_2 )(\Hom_{H}(B,A)) \longrightarrow \dots .
\end{align*}
Since $F_2$ is exact, the $R[\Gamma]$-modules
$R^1(F_2 F_1 )(B)$ and $\Ext^{1}_{H}(B,A)_{\chi}$ are isomorphic.
But $(F_2 F_1)(B)$ is isomorphic to $\Hom_G(B(\chi),A)$ by Lemma \ref{lem:hom_twist}.

Define the functor $T_1$  "twisting with $\chi$" from the category of $R[G]$-modules to itself,
the functor $T_2(\spacedot) = \Hom_G(\spacedot \, , A)$ from 
the category of $R[G]$-modules to the category of $R$-modules
and the forgetful functor $F$ that forgets the $\Gamma$ action and goes from 
the category of $R[\Gamma]$-modules to the category of $R$-modules.
Then we have a natural isomorphism $F F_2 F_1 \simeq T_2 T_1$.
The functor $T_1$ sends injective objects to injective objects, which are in particular acyclic
objects. Hence, we can apply Theorem \ref{thm:spectral_low_degree} to get the following exact sequence:
\begin{align*}
0 & \longrightarrow (R^{1}T_2)( B(\chi) ) \longrightarrow 
R^1( T_2 T_1 )(B)  \longrightarrow  T_2 (R^1 T_1(B)) \longrightarrow 
(R^{2} T_2 ) (T_1 (B) ) \longrightarrow \dots \, .
\end{align*}
Since $T_1$ is exact, we obtain that $(R^{1}T_2)( B(\chi) ) = \Ext^1_G(B(\chi),A)$ is isomorphic to $R^1( T_2 T_1 )(B)$
as $R$-modules.
Putting everything together, we now have isomorphisms of $R$-modules 
$$
\Ext^{1}_{H}(B,A)_{\chi} \simeq R^1(F_2 F_1 )(B) \simeq R^1( T_2 T_1 )(B)  \simeq \Ext_{G}^{1}(B(\chi),A) ,
$$
which is what we wanted to show.
\end{proof}

When we take $\chi$ to be the trivial character in Proposition \ref{prop:grpext_twist},
we obtain the $\Gamma$-invariant extensions, which are as expected just
extensions of $R[G]$-modules.
We conclude by remarking that for 
two $R[G]$-modules $A$, $B$ and for a character 
$\chi$ of $G$, the $R$-module $\Ext^{1}_{G}(A,B)$ is isomorphic to
the $R$-module $\Ext^{1}_{G}(A(\chi),B(\chi))$.
Here $A(\chi)$ (resp. $B(\chi)$) is the twist of $A$ (resp. $B$) by $\chi$.

\section{Extensions of group schemes} \label{sec:groupschemes}

Recall that $\pi$ denotes the prime ideal above $p$ in the ring of $S$-integers $O_S$ of the number field $K$.
In this section, the ring $R$ will be either the ring of $S$-integers $O_S$, 
the number field $K$, the completion of $O_S$ with respect to $\bad$
or the fraction field of such a completion of $O_S$.
Since $p$ does not split in $K/\Q$, in each case
we can talk about the fraction field of $R$, which we denote by $F$.
Furthermore, let $L=F(\zeta_p)$ and $\Gamma = \Gal(L/F)$.

First we state some facts from \cite[Section 8.7-8.10]{KatzMazur:1985}.
Let $r$ be a unit in $R$.
Consider the finite flat commutative group scheme 
$$
T(r) = \Spec( \prod_{i=0}^{p-1} R[X_i] /(X_{i}^p - r^{i})  )
= \amalg_{i=0}^{p-1} \,  \Spec( R[X_i ]/(X_i^p - r^{i}) )
$$ over $R$.
The scheme $T(r)$ is an extension of
$\Z/p\Z$ by $\mu_p$.
%
For an $R$-algebra $A$, the $A$-valued points in $T(r)$ are pairs $(a,i/p) \in (A,\Q)$
such that $a^p = r^{i}$ and $0 \leq i \leq p-1$.
The group law of $T(r)$ can be described by
$$
(a,i/p) \times (b,j/p) = 
\left\{
\begin{array}{rl}
  (ab,(i+j)/p) \, ,& i+j < p \\
  (ab/r,(i+j-p)/p)  \, ,& i+j \geq p  \quad . \\
\end{array}
\right.
$$
The group schemes $T(r^p)$ are split extensions of $\Z/p\Z$ by $\mu_p$
and we see that in that case we have: 
$$
(a,i/p) = (ar^{-i},0) \times (r^{i},i/p) .
$$
%
%
If $r$ and $r'$ are units in $R$,
then the group schemes $T(r)$ and $T(r')$ are isomorphic if and only if $r$ and $r'$ generate the same
subgroup in $R^*/(R^{*})^p$.

\begin{lemma} \label{lem:fppf}
The sequence
$$
0 \rightarrow R^*/(R^{*})^p \rightarrow \Ext^{1}_{R,[p]}(\Z/p\Z,\mu_p) \rightarrow \Class(R)[p] \rightarrow 0 
$$
is exact.
\end{lemma}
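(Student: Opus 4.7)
The plan is to combine the Kummer exact sequence in the fppf topology with an identification of $\Ext^1_{R,[p]}(\Z/p\Z,\mu_p)$ with $H^1_{\text{fppf}}(R,\mu_p)$.

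First, the Kummer sequence $0 \to \mu_p \to \G_m \xrightarrow{[p]} \G_m \to 0$ is exact in the fppf topology on $\Spec R$, since the $p$-th power map on $\G_m$ is faithfully flat. Taking fppf cohomology, using $H^0_{\text{fppf}}(R,\G_m)=R^*$ together with $H^1_{\text{fppf}}(R,\G_m)=\mathrm{Pic}(R)=\Class(R)$ (which holds for each of the rings $R$ listed at the beginning of Section~\ref{sec:groupschemes}, since they are either fields, DVRs or Dedekind domains), immediately yields
$$
0 \to R^*/(R^*)^p \to H^1_{\text{fppf}}(R,\mu_p) \to \Class(R)[p] \to 0.
$$

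Next I would identify $H^1_{\text{fppf}}(R,\mu_p)$ with $\Ext^1_{R,[p]}(\Z/p\Z,\mu_p)$. Given an extension $0 \to \mu_p \to G \to \Z/p\Z \to 0$ killed by $p$, the morphism $G \to \Z/p\Z = \amalg_{i=0}^{p-1}\Spec R$ is an fppf $\mu_p$-torsor, and restricting it to the component indexed by $1$ produces a $\mu_p$-torsor on $\Spec R$. Conversely, a $\mu_p$-torsor $T$ has contracted powers $T^{\otimes i}$ with a canonical trivialisation $T^{\otimes p} \simeq \mu_p$, and the disjoint union $G = \amalg_{i=0}^{p-1} T^{\otimes i}$ inherits a group structure realising it as an extension of $\Z/p\Z$ by $\mu_p$ killed by $p$. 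Under this bijection the Kummer map $r \mapsto [\Spec R[X]/(X^p - r)]$ corresponds to $r \mapsto [T(r)]$, which one reads off from the explicit description of $T(r)$ recalled in Section~\ref{sec:groupschemes}.

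The main obstacle is checking that this torsor--extension bijection respects the abelian group structures, i.e.\ that Baer sum on extensions matches the contracted product on $\mu_p$-torsors. I would verify this by computing $T(r) + T(r') \simeq T(rr')$ directly from the given group law $(a,i/p)\times(b,j/p)=(ab,(i+j)/p)$ (resp.\ $(ab/r,(i+j-p)/p)$) on $T(r)$; combined with the isomorphism criterion ``$T(r)\simeq T(r')$ iff $r$ and $r'$ generate the same subgroup of $R^*/(R^*)^p$'' recorded just above the lemma, this gives the injectivity of $r \mapsto [T(r)]$ and transports the Kummer sequence into the exact sequence asserted in the lemma.
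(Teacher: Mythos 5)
Your proof follows the same skeleton as the paper's: both rest on the Kummer sequence $0 \to \mu_p \to \G_m \to \G_m \to 0$ in the fppf topology, which gives $0 \to R^*/(R^*)^p \to H^1_{\text{fppf}}(\Spec(R),\mu_p) \to \Class(R)[p] \to 0$, followed by the identification $H^1_{\text{fppf}}(\Spec(R),\mu_p) \simeq \Ext^1_{R,[p]}(\Z/p\Z,\mu_p)$. The difference is that the paper simply cites this identification (Proposition 2.2 i) of Schoof's paper, after first relating $\Ext^1_R(\Z/p\Z,\mu_p)$ to $\Ext^1_R(\Z,\mu_p)$ via the sheaf sequence $0 \to \Z \to \Z \to \Z/p\Z \to 0$), whereas you unfold it into the explicit torsor construction: fibre over $1 \in \Z/p\Z$ in one direction, $\amalg_{i=0}^{p-1} T^{\otimes i}$ with the canonical trivialisation of $T^{\otimes p}$ in the other. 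That is essentially how the cited result is proved, so your route is self-contained where the paper's is not, at the cost of more verification. One soft spot: you propose to check compatibility of Baer sum with contracted product only by computing $T(r) + T(r') \simeq T(rr')$. That verifies additivity only on the image of the Kummer map $R^*/(R^*)^p \to H^1_{\text{fppf}}$, and for the Dedekind domains with nontrivial class group that the lemma must cover, there are $\mu_p$-torsors not of the form $\Spec(R[X]/(X^p - r))$; you need the bijection to be a group homomorphism on all of $H^1_{\text{fppf}}(\Spec(R),\mu_p)$ in order to transport the exactness at the $\Class(R)[p]$ end. The fix is formal rather than computational --- both the Baer sum and the contracted product are pushouts along the multiplication $\mu_p \times \mu_p \to \mu_p$ applied to the fibre product over the base, so the identification is additive by construction --- but as written your verification plan does not cover the general case.
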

\begin{proof}
(cf. \cite{Mazur:1977} and \cite[Proposition 2.2]{Schoof:2009}).
Apply $\Hom(\spacedot,\mu_p)$ to 
the exact sequence of fppf sheaves 
$0 \rightarrow \Z \rightarrow \Z \rightarrow \Z/p\Z \rightarrow 0$
to obtain
$$
0 \rightarrow \mu_p(R) \rightarrow \Ext^{1}_{R}(\Z/p\Z,\mu_p) \rightarrow 
\Ext^{1}_{R}(\Z,\mu_p) \simeq H^1_{\text{fppf}}(\Spec(R),\mu_p) \rightarrow 0 .
$$
On the other hand, we apply the global section functor 
to the Kummer sequence of fppf sheaves
$$
0 \longrightarrow \mu_p \longrightarrow \G_m \stackrel{[p]}{\longrightarrow} \G_m \longrightarrow 0
$$
to obtain
$$
0 \rightarrow R^*/(R^{*})^p \rightarrow H^1_{\text{fppf}}(\Spec(R),\mu_p) \rightarrow \Class(R)[p] \rightarrow 0 ,
$$
where $\Class(R)$ is the class group of $R$.
The lemma follows by \cite[Proposition 2.2 i)]{Schoof:2009} that says
that $H^1_{\text{fppf}}(\Spec(R),\mu_p) \simeq \Ext^{1}_{R,[p]}(\Z/p\Z,\mu_p)$.
\end{proof}

We focus again on the group $\Ext^1_{O_S}(\mu_p,\Z/p\Z)$.
If $R$ is a completion of $O_S$ at $\bad$,
the group $\Ext^1_R(\mu_p,\Z/p\Z)$ is trivial since $\mu_p$ is connected
and the connected-\'etale exact sequence gives a section for such extensions.
Therefore, extensions of $\mu_p$ by $\Z/p\Z$ are locally split and hence killed by $p$.
Since the completion of $O_S$ at $\bad$ is flat over $O_S$, extensions of $\mu_p$ by $\Z/p\Z$
over the ring $O_S$ are also killed by $p$ and
$\Ext^1_{O_S}(\mu_p,\Z/p\Z) = \Ext^1_{O_S,[p]}(\mu_p,\Z/p\Z)$.
Let $\omega : \Gamma = \Gal(L/F) \rightarrow \F_p^*$
be the character such that for all $\sigma \in \Gamma$ we have $\sigma(\zeta_p) = \zeta_p^{\omega(\sigma)}$.
The scheme $\mu_{p}$ over $R[\zeta_p/p]$ is a constant group scheme
and $(\mu_{p})_{R[\zeta_p/p] }  \simeq_{\F_p} (\Z/p\Z)_{R[\zeta_p/p]}$.
For integers $0 \leq i,j \leq p-2$ we have the following isomorphisms of $\F_p$-modules:
$$
\Ext_{ R[\zeta_p/p]}^{1}  \left( \Z/p\Z(\omega^i),\Z/p\Z(\omega^j) \right) \simeq_{\F_p} \Ext_{ R[\zeta_p/p] }^{1}(\Z/p\Z,\mu_{p}) .
$$

\begin{lemma} \label{lem:ext_iso_units}
$\Ext^1_{R[1/p],[p]}(\Z/p\Z(\omega^i),\mu_p)  \simeq_{\F_p} \Ext^1_{ R[\zeta_p/p]  ,[p] }(\Z/p\Z,\mu_p)_{\omega^i}$.
\end{lemma}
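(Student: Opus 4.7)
The plan is to transport both sides of the claimed isomorphism into a setting where Proposition \ref{prop:grpext_twist} applies directly, by passing from the category of finite flat commutative group schemes to the category of Galois modules.

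First, I would use the fact that $p$ is invertible in $R[1/p]$ and in $R[\zeta_p/p]$ to conclude that every finite flat commutative group scheme of $p$-power order over these bases is étale, so the category of such schemes is equivalent to the category of continuous finite abelian modules for the étale fundamental group. Let $G$ denote the étale fundamental group of $\Spec R[1/p]$ and $H \subset G$ the open subgroup corresponding to $\Spec R[\zeta_p/p]$, so that $G/H = \Gamma = \Gal(L/F)$ and the character $\omega$ factors through $\Gamma$. Under this dictionary, $\Z/p\Z(\omega^i)$ corresponds to $\F_p$ with $G$-action via $\omega^i$, which is trivial when restricted to $H$; and $\mu_p$ corresponds to $\F_p$ with $G$-action via $\omega$, which again becomes trivial when restricted to $H$ because $\zeta_p \in L$. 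Since restricting to extensions killed by $p$ amounts to restricting to $\F_p$-module extensions, one obtains natural identifications
$$\Ext^1_{R[1/p],[p]}(\Z/p\Z(\omega^i),\mu_p) \simeq \Ext^1_{\F_p[G]}(\Z/p\Z(\omega^i),\mu_p)$$
and
$$\Ext^1_{R[\zeta_p/p],[p]}(\Z/p\Z,\mu_p) \simeq \Ext^1_{\F_p[H]}(\Z/p\Z,\mu_p),$$
with the $\Gamma$-action on the right-hand group of the second line matching the natural $G/H$-action on $\Ext^1_H$.

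The final step is to apply Proposition \ref{prop:grpext_twist} with the ring taken to be $\F_p$, the groups $G \supset H$ as above, the modules $B = \Z/p\Z$ (trivial $G$-action) and $A = \mu_p$ (trivial $H$-action), and the character $\chi = \omega^i$, which is trivial on $H$ because it factors through $\Gamma$. This immediately yields
$$\Ext^1_{\F_p[H]}(\Z/p\Z,\mu_p)_{\omega^i} \simeq \Ext^1_{\F_p[G]}(\Z/p\Z(\omega^i),\mu_p),$$
and combining with the identifications above gives the lemma. The main obstacle I anticipate is not the invocation of Proposition \ref{prop:grpext_twist}, which is essentially formal once the setup is in place, but rather pinning down the equivalence between $p$-torsion extensions of étale group schemes in the fppf category and extensions of continuous $\F_p[G]$-modules, and verifying that this equivalence is compatible both with the twist operation $\Z/p\Z \rightsquigarrow \Z/p\Z(\omega^i)$ and with taking $\omega^i$-eigenspaces under the $\Gamma$-action. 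Once that dictionary is secured, the proof reduces to a direct application of the previous section.
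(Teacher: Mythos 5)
Your proposal is correct and follows essentially the same route as the paper, whose entire proof is ``This follows immediately from Proposition \ref{prop:grpext_twist}''; the \'etale-group-scheme-to-Galois-module dictionary you spell out is exactly the reduction the paper sets up in its introduction and in Section \ref{sec:modules}. You simply make explicit the identifications that the paper leaves implicit.
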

\begin{proof}
This follows immediately from Proposition \ref{prop:grpext_twist}.
\end{proof}


\begin{corollary}
If $\zeta_p \notin R$, then
$\Ext^1_{ R[\zeta_p/p],[p] }(\Z/p\Z,\mu_p) \simeq_{\F_p} \bigoplus\limits_{i=0}^{p-2} \Ext^1_{ R[1/p] ,[p] }(\Z/p\Z(\omega^i),\mu_p)$.
\end{corollary}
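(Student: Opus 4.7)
The plan is to combine Lemma~\ref{lem:ext_iso_units} with a Maschke-style decomposition of the left-hand side into $\Gamma$-eigenspaces, where $\Gamma = \Gal(L/F)$. Since $\zeta_p \notin R$, the group $\Gamma$ is a nontrivial subgroup of $\Gal(\Q(\zeta_p)/\Q) \simeq \F_p^*$, so $|\Gamma|$ divides $p-1$ and is in particular invertible in $\F_p$.

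First I would view $M := \Ext^1_{R[\zeta_p/p],[p]}(\Z/p\Z,\mu_p)$ as a module over the group algebra $\F_p[\Gamma]$, using the natural Galois action of $\Gamma$ (this is the same action that defines $\chi$-eigenspaces in Proposition~\ref{prop:grpext_twist}). By Maschke's theorem, $\F_p[\Gamma]$ is semisimple, and one can exhibit orthogonal idempotents $e_\chi = |\Gamma|^{-1} \sum_{\sigma \in \Gamma} \chi(\sigma)^{-1} \sigma$, one for each character $\chi : \Gamma \to \F_p^*$. These sum to $1$ and split $M$ canonically as $M \simeq \bigoplus_\chi M_\chi$ into its $\chi$-eigenspaces.

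Next I would reindex by observing that, since $\Gamma$ sits inside $\F_p^*$ via $\omega$, every character $\chi$ of $\Gamma$ arises as the restriction of some $\omega^i$ with $i = 0, \ldots, p-2$; thus the decomposition reads $M \simeq \bigoplus_{i=0}^{p-2} M_{\omega^i}$. Applying Lemma~\ref{lem:ext_iso_units} to each summand then identifies $M_{\omega^i}$ with $\Ext^1_{R[1/p],[p]}(\Z/p\Z(\omega^i),\mu_p)$, which is exactly the claimed isomorphism.

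There is no real obstacle here: once the semisimplicity of $\F_p[\Gamma]$ is in hand (which is automatic from $p \nmid |\Gamma|$), the remainder is a direct assembly of the ingredients already supplied by the preceding lemma. The only mild subtlety worth noting is that when $\Gamma$ is a proper subgroup of $\F_p^*$, several of the $\omega^i$ have the same restriction to $\Gamma$, so the indexing in the displayed sum is to be understood modulo this identification.
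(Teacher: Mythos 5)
Your proof is correct and follows essentially the same route as the paper: decompose $\Ext^1_{R[\zeta_p/p],[p]}(\Z/p\Z,\mu_p)$ as an $\F_p[\Gamma]$-module into its $\omega^i$-eigenspaces and identify each summand via Lemma~\ref{lem:ext_iso_units}. You in fact supply more justification than the paper does (the semisimplicity of $\F_p[\Gamma]$ via the explicit idempotents, and the caveat that distinct powers $\omega^i$ can restrict to the same character of $\Gamma$ when $\Gamma$ is a proper subgroup of $\F_p^*$), which is a welcome precision rather than a deviation.
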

\begin{proof}
The group $\Ext^1_{ R[\zeta_p/p],[p] }(\Z/p\Z,\mu_p)$ is an $\F_p[\Gamma]$-module.
Hence it can be decomposed as
$$
\Ext^1_{ R[\zeta_p/p],[p] }(\Z/p\Z,\mu_p) \simeq_{\F_p[\Gamma]} \oplus_i \Ext^1_{ R[\zeta_p/p],[p] }(\Z/p\Z,\mu_p)_{\omega_i} .
$$
By Lemma \ref{lem:ext_iso_units} each summand is isomorphic to 
$\Ext^1_{ R[1/p] ,[p] }(\Z/p\Z(\omega^i),\mu_p)$ as an $\F_p$-module.
\end{proof}

\begin{lemma}[\cite{Schoof:2003}, Corollary 2.4] \label{prop:mayervietoris}
Let $\gsc'$ and $\gsc''$ be two finite flat commutative group schemes
over $O_S$, let $p$ be a prime and let $\widehat{O_S} = (O_S \otimes \Z_p )$.
Then the following sequence is exact:
\begin{align*}
0 \rightarrow \Hom_{O_S}(\gsc'',\gsc') \rightarrow 
\Hom_{ \widehat{O_S}  }(\gsc'',\gsc') \times
\Hom_{ O_S[1/p] }(\gsc'',\gsc') \rightarrow
\Hom_{  \widehat{O_S}[1/p] }(\gsc'',\gsc') \\ 
\rightarrow \Ext^{1}_{  O_S }(\gsc'',\gsc')
\rightarrow \Ext^1_{ \widehat{O_S} }(\gsc'',\gsc') \times
\Ext^1_{ O_S[1/p] }(\gsc'',\gsc') \rightarrow 
\Ext^1_{ \widehat{O_S}[1/p] }(\gsc'',\gsc')
\end{align*}
\end{lemma}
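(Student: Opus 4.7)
The plan is to interpret this as a Mayer--Vietoris sequence associated to the faithfully flat cover $\Spec(O_S[1/p]) \sqcup \Spec(\widehat{O_S}) \to \Spec(O_S)$. Since $p$ does not split in $K/\Q$, the prime $\bad$ is the unique prime of $O_S$ above $p$, so inverting $p$ in $O_S$ is the same as removing $\bad$, and $\widehat{O_S}$ is the $p$-adic completion of $O_S$. In particular one has the Cartesian square
$$
\begin{array}{ccc}
O_S & \longrightarrow & \widehat{O_S} \\
\downarrow & & \downarrow \\
O_S[1/p] & \longrightarrow & \widehat{O_S}[1/p]
\end{array}
$$
with $O_S \to O_S[1/p] \times \widehat{O_S}$ faithfully flat.

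First I would invoke the Beauville--Laszlo patching theorem (applied to the non-zero-divisor $p$) to obtain an equivalence of categories between finite flat commutative group schemes over $O_S$ and triples $(G_0, G_\infty, \phi)$, where $G_0$ is a finite flat group scheme over $O_S[1/p]$, $G_\infty$ one over $\widehat{O_S}$, and $\phi$ an isomorphism of their base changes to $\widehat{O_S}[1/p]$. At the level of Hopf algebras this is classical: a finite flat $O_S$-module is recovered as the pullback of its localization and its completion along an identification over $\widehat{O_S}[1/p]$, and the algebra, counit, and comultiplication structures glue uniquely.

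Second, from this equivalence the $\Hom$ portion of the sequence is immediate: $\Hom_{O_S}(\gsc'', \gsc')$ is the equalizer of the two restriction maps into $\Hom_{\widehat{O_S}[1/p]}(\gsc'', \gsc')$, which is exactly exactness at the first three spots. For the connecting map, given $\phi \in \Hom_{\widehat{O_S}[1/p]}(\gsc'', \gsc')$, I would define $\delta(\phi) \in \Ext^1_{O_S}(\gsc'', \gsc')$ by gluing the trivial extensions $\gsc' \times \gsc''$ over $O_S[1/p]$ and over $\widehat{O_S}$ via the shear automorphism $(a, b) \mapsto (a + \phi(b), b)$ on $\widehat{O_S}[1/p]$; since this automorphism is the identity on $\gsc'$ and on $\gsc''$, the patching equivalence produces a well-defined extension class over $O_S$. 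Exactness at the two $\Ext^1$ spots is then a diagram chase using the same equivalence, with any gluing data for a pair of extensions on the two pieces being modifiable by elements of $\Hom$ over the overlap.

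The main obstacle is the first step: establishing (or citing) the patching equivalence for finite flat group schemes. The classical Beauville--Laszlo result handles quasi-coherent modules, and the upgrade to finite flat Hopf algebras requires checking that the multiplication, comultiplication, and counit all descend through the pullback square. Alternatively one can bypass the equivalence and argue directly by faithfully flat descent along $0 \to O_S \to O_S[1/p] \oplus \widehat{O_S} \to \widehat{O_S}[1/p] \to 0$, but then each term of the desired sequence, and in particular exactness at the $\Ext^1$ spots, must be verified separately; packaging everything through the patching equivalence is noticeably cleaner.
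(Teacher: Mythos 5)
The paper does not prove this lemma at all: it is quoted verbatim from the cited reference (Schoof, Corollary 2.4), and that reference establishes it in exactly the way you describe, namely by first proving that the category of finite flat commutative group schemes over $O_S$ is equivalent to the category of triples $(G_0,G_\infty,\phi)$ glued over $\widehat{O_S}[1/p]$, and then reading off the six-term Hom--Ext sequence from that equivalence (your shear-automorphism description of the connecting map is also how it is done there). Your reconstruction is correct; the only small remark is that the statement does not require $p$ to be non-split in $K/\Q$ --- one simply takes $\widehat{O_S}=O_S\otimes\Z_p$ to be the product of the completions at all primes above $p$, and the patching argument goes through verbatim with that product of complete local rings in place of a single completion.
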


\begin{lemma} \label{lem:hom_mup_const}
If $p$ does not split in $K/\Q$, then
$\Hom_{ O_S[1/p] }(\mu_p,\Z/p\Z) \simeq \Hom_{\widehat{O_S}[1/p] }(\mu_p,\Z/p\Z)$.
\end{lemma}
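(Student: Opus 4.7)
I plan to compute both sides directly and observe that each is either $0$ or $\F_p$, with the answer determined by whether $\zeta_p$ lies in the fraction field. Since $p$ is invertible in both $O_S[1/p]$ and $\widehat{O_S}[1/p]$, the group schemes $\mu_p$ and $\Z/p\Z$ are \'etale, so hom-sets of group schemes coincide with Galois-equivariant homomorphisms of the associated Galois modules. As $\F_p$-modules we have $\Hom(\mu_p,\Z/p\Z)\simeq \F_p$, with the Galois action given by $\omega^{-1}$; taking invariants yields $\F_p$ precisely when $\omega$ is trivial on the relevant Galois group (that is, when $\zeta_p$ lies in $K$, respectively in $K_\pi$), and $0$ otherwise. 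In either case the natural base-change map is the obvious isomorphism, so the lemma reduces to the equivalence $\zeta_p\in K \Longleftrightarrow \zeta_p\in K_\pi$.

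One direction is trivial. For the converse I would show that the extension $K(\zeta_p)/K$ is totally ramified at $\pi$: in that situation the local degree $[K_\pi(\zeta_p):K_\pi]$ equals the global degree $[K(\zeta_p):K]$, and $\zeta_p\in K_\pi$ then forces $K(\zeta_p)=K$.

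To prove total ramification at $\pi$, set $F=K\cap \Q(\zeta_p)$. Since $p$ does not split in $K/\Q$ it does not split in the subfield $F/\Q$ either, so $F$ has a unique prime $\pi_F$ above $p$; being a subfield of the totally ramified extension $\Q(\zeta_p)/\Q$, the extension $\Q(\zeta_p)/F$ is itself totally ramified at $\pi_F$. I would then use the natural restriction isomorphism $\Gal(K(\zeta_p)/K)\simeq \Gal(\Q(\zeta_p)/F)$ coming from $K(\zeta_p)=K\cdot \Q(\zeta_p)$ and $K\cap \Q(\zeta_p)=F$. Choosing a prime $\mathfrak P$ of $K(\zeta_p)$ above $\pi$, the plan is to verify that this isomorphism sends the inertia at $\mathfrak P$ onto the inertia at $\mathfrak P\cap \Q(\zeta_p)$; since the latter is the whole Galois group, so is the former, which is precisely the total ramification of $K(\zeta_p)/K$ at $\pi$.

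The key technical point is the compatibility of the restriction isomorphism with inertia subgroups. It reduces to the observation that the residue field at $\mathfrak P$ is generated over its prime field by the residues coming from $K$ and from $\Q(\zeta_p)$ (since $K(\zeta_p)$ is generated as a field by $K$ and $\Q(\zeta_p)$), so an automorphism in $\Gal(K(\zeta_p)/K)$ acts trivially on this residue field if and only if its restriction to $\Q(\zeta_p)$ does; uniqueness of the prime $\pi_F$ of $F$ above $p$ is what lets us pick compatible primes on both sides.
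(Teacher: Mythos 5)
Your reduction of the lemma to the equivalence $\zeta_p\in K \Leftrightarrow \zeta_p\in K_\pi$ is correct, and it is exactly what the paper's (one-line) proof amounts to: both Hom groups are \'etale-locally $\F_p(\omega^{-1})$ and taking invariants gives $\F_p$ or $0$ according to whether $\zeta_p$ lies in the base. The gap is in your proof of the nontrivial implication. The restriction isomorphism $\Gal(K(\zeta_p)/K)\simeq\Gal(\Q(\zeta_p)/F)$ carries the inertia subgroup at $\mathfrak{P}$ injectively \emph{into} the inertia subgroup at $\mathfrak{P}\cap\Q(\zeta_p)$, but in general not onto it. Your residue-field argument shows only that the residue extension of $\mathfrak{P}$ over $\pi$ is trivial, i.e.\ that decomposition equals inertia for $\mathfrak{P}\mid\pi$; it does not show that the decomposition group is all of $\Gal(K(\zeta_p)/K)$, and indeed $\pi$ may split in $K(\zeta_p)/K$. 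Concretely, take $p=5$ and $K=\Q(\sqrt{-5})$: then $5$ ramifies (so does not split) in $K$, and $K\cap\Q(\zeta_5)=\Q$, so $[K(\zeta_5):K]=4$; but $-1$ is a square in $\Q_5$, so $K_\pi=\Q_5(\sqrt{-5})=\Q_5(\sqrt{5})$ already contains the quadratic subfield of $\Q_5(\zeta_5)$, and the local degree $[K_\pi(\zeta_5):K_\pi]$ is $2$. Thus $K(\zeta_5)/K$ is not totally ramified at $\pi$ (here $e=2$, $f=1$, $g=2$), even though the hypothesis of the lemma holds.

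The failure is not merely cosmetic. If $p$ is allowed to ramify in $K$, the equivalence you are after can itself fail: by Krasner's lemma there exist quartic fields $K\neq\Q(\zeta_5)$ with $K_\pi\simeq\Q_5(\zeta_5)$ (perturb the Eisenstein polynomial of $\zeta_5-1$ by a high power of $5$), and for such $K$ the prime $5$ is totally ramified yet $\zeta_5\in K_\pi$ while $\zeta_5\notin K$, so the two Hom groups have different orders. What makes the statement work in all of the paper's applications is that $p$ is \emph{unramified} in $K$ there, and under that hypothesis the argument is one line: $K_\pi/\Q_p$ is unramified while $\Q_p(\zeta_p)/\Q_p$ is totally ramified, so $K_\pi\cap\Q_p(\zeta_p)=\Q_p$ and $[K_\pi(\zeta_p):K_\pi]=p-1\geq[K(\zeta_p):K]$, forcing the local and global degrees of $K(\zeta_p)/K$ to agree (and, incidentally, proving your total-ramification claim in this case). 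You should replace the inertia-surjectivity step by this argument and record the unramifiedness hypothesis it requires; for $p=2$ the lemma is vacuous since $\zeta_2=-1\in K$ always.
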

\begin{proof}
If $\zeta_p \in K$ then both groups are cyclic of order $p$.
If $\zeta_p \notin K$ then both groups are trivial.
\end{proof}
We are now ready to prove Theorem \ref{thm:main}.

\begin{proof}[Proof of Theorem \ref{thm:main}]
Consider the exact sequence of $\F_p[\Gamma]$-modules of Lemma \ref{lem:fppf}:
$$
0 \rightarrow O_S[\zeta_p/p]^*/(O_S[\zeta_p/p]^{*})^p \rightarrow
\Ext^1_{O_S[\zeta_p/p],[p]}(\Z/p\Z,\mu_p) \rightarrow
\Class(O_S[\zeta_p/p])[p] \rightarrow 0 .
$$
The sequence is still left exact after taking $\omega^2$-eigenspaces.
The condition that the $\omega^2$-eigenspace of the $p$-torsion of
the class group $O_S[\zeta_p/p]$, denoted by $\Class(O_S[\zeta_p/p])[p]_{\omega^2}$, is trivial implies that
$$
\left( O_S[\zeta_p/p]^*/(O_S[\zeta_p/p]^{*})^p \right)_{\omega^2}
\simeq_{\F_p[\Gamma]} \Ext^1_{O_S[\zeta_p/p],[p]}(\Z/p\Z,\mu_p)_{\omega^2} .
$$
Remember that we assume that $p$ does not split in $K/\Q$, hence $\widehat{O_S}[1/p]$ is a field.
We obtain from Lemma \ref{prop:mayervietoris},
together with Lemma \ref{lem:hom_mup_const},
the following exact sequence of $\F_p$-modules:
\begin{align} \label{eq:exseq}
0 \rightarrow & \Ext^1_{O_S,[p]}(\mu_p,\Z/p\Z) \rightarrow \Ext^1_{O_S[1/p],[p]}(\mu_p,\Z/p\Z)
\rightarrow \Ext^1_{\widehat{O_S}[1/p],[p]}(\mu_p,\Z/p\Z) .
\end{align}
Twisting by the character $\omega$ gives the following two isomorphisms:
\begin{align*}
\Ext^1_{O_S[1/p]}\left( \mu_p,\Z/p\Z \right) & \simeq_{\F_p}
\Ext^1_{O_S[1/p]}\left( \Z/p\Z(\omega^2),\mu_p \right) \\
\Ext^1_{\widehat{O_S}[1/p]}(\mu_p,\Z/p\Z) & \simeq_{\F_p}
\Ext^1_{\widehat{O_S}[1/p]}(\Z/p\Z(\omega^2),\mu_p) .
\end{align*}
In particular, we have isomorphisms between the $p$-torsion
subgroups of these extension groups.
From (\ref{eq:exseq}) we obtain
$$
0 \rightarrow \Ext^1_{O_S, [p]}(\mu_p,\Z/p\Z) \rightarrow \Ext^1_{O_S[1/p],[p]}\left( \Z/p\Z(\omega^2),\mu_p \right)
\rightarrow \Ext^1_{\widehat{O_S}[1/p],[p]}\left( \Z/p\Z(\omega^2),\mu_p \right) .
$$
By Lemma \ref{lem:ext_iso_units} we obtain 
$$
0 \longrightarrow \Ext^1_{O_S}(\mu_p,\Z/p\Z) \longrightarrow 
\left( O_S[\zeta_p/p]^*/(O_S[\zeta_p/p]^{*})^ p \right)_{\omega^2} \longrightarrow 
\left( \widehat{L}^*/(\widehat{L}^{*})^p \right)_{\omega^2} .
$$
\end{proof}

\section{Example calculations} \label{sec:example_ext}

We calculate, using the isomorphism of Theorem \ref{thm:main},
for specific $p$ and $O_S$ the extension group
$\Ext^1_{O_S}(\mu_p,\Z/p\Z)$.
We will use the following lemma in the computations:

\begin{lemma} \label{lem:padic_unit_comp}
The group $\Q_2^* / (\Q_2^{*})^2$ is generated
by $2,3$ and $5$.
For $p>2$ the group $\Q_p^* / (\Q_p^{*})^p$ is generated
by $p$ and $1+p$. 
\end{lemma}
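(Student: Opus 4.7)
The plan is to use the standard decomposition $\Q_p^* \simeq p^\Z \times \Z_p^*$ coming from the $p$-adic valuation, and then describe $\Z_p^*/(\Z_p^*)^p$ explicitly. Since raising to the $p$-th power preserves this product decomposition, it suffices to treat the uniformizer part (which trivially contributes a cyclic factor of order $p$ generated by $p$) and the unit part separately.

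For $p > 2$, I would use the further decomposition $\Z_p^* \simeq \mu_{p-1} \times U^{(1)}$, where $U^{(1)} = 1 + p\Z_p$ is the group of principal units and $\mu_{p-1}$ consists of the Teichm\"uller representatives. The factor $\mu_{p-1}$ has order prime to $p$, so $(\mu_{p-1})^p = \mu_{p-1}$ and it contributes nothing to the quotient. For the principal units, the $p$-adic logarithm and exponential give an isomorphism $U^{(1)} \simeq (p\Z_p,+) \simeq \Z_p$ as topological groups. Under this isomorphism $(U^{(1)})^p$ corresponds to $p\Z_p \cdot p$, so $U^{(1)}/(U^{(1)})^p \simeq \Z/p\Z$. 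To finish I would check that $1+p$ is a topological generator of $U^{(1)}$: equivalently, that its $\log$ has $p$-adic valuation exactly $1$, which follows from the series $\log(1+p) = p - p^2/2 + \cdots$. Combined with the contribution of $p$, we obtain that $p$ and $1+p$ generate $\Q_p^*/(\Q_p^*)^p$.

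For $p = 2$ the analogous decomposition is $\Z_2^* \simeq \{\pm 1\} \times (1+4\Z_2)$. The key point is that squaring sends $(1+4\Z_2)$ onto $1 + 8\Z_2$ (one inclusion by direct expansion, the reverse by Hensel's lemma), so
\[
\Z_2^*/(\Z_2^*)^2 \simeq \Z_2^*/(1+8\Z_2) \simeq (\Z/8\Z)^*,
\]
which is an elementary abelian $2$-group of order $4$ with non-trivial classes represented by $3, 5, 7$. Since $3 \cdot 5 \equiv 7 \pmod 8$, the classes of $3$ and $5$ generate $(\Z/8\Z)^*$, and adjoining the uniformizer $2$ then gives the full group $\Q_2^*/(\Q_2^*)^2$.

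The only subtlety, and the one I would need to be most careful with, is the identification of the structure of the principal units by $p$-adic $\log$ for odd $p$ (and the parallel Hensel argument for $p=2$); beyond this the proof is a routine unwinding of the decomposition of $\Q_p^*$.
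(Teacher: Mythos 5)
Your proposal is correct and follows essentially the same route as the paper: both rest on the decomposition $\Q_p^* \simeq \mu_{p-1}\times p^{\Z}\times(1+p\Z_p)$ for odd $p$ and, at $p=2$, on the fact that a $2$-adic unit is a square if and only if it is $1 \bmod 8$ (so that $3$ and $5$ are independent modulo squares). The only cosmetic difference is that you identify the principal units with $\Z_p$ via the $p$-adic logarithm and check that $1+p$ is a topological generator, whereas the paper invokes Hensel's lemma directly to obtain $(1+p\Z_p)^p = 1+p^2\Z_p$; both are standard and both close the argument.
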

\begin{proof}
We have the following isomorphism of groups:
\begin{align} \label{eqn:Qpdecomp}
\Q_p^* \simeq \mu_{p-1} \times p^{\Z} \times (1+p\Z_p) .
\end{align}
First we consider the case $p > 2$.
Then $(\Q_p^{*})^p = \mu_{p-1} \times p^{p\Z} \times (1+p^2\Z_p)$,
where we used Hensel's Lemma to obtain the equality $(1+p\Z_p)^p = (1+p^2\Z_p)$.
The lemma follows from.

Now suppose that $p=2$.
A unit $x \in \Z_{2}^{*}$ is a square
if and only if $x \equiv 1 \pmod{8}$.
The lemma follows again from the isomorphism \eqref{eqn:Qpdecomp}
and the fact that $3$ and $5$ are independent mod $\Q_2^{*2}$:
if they were not independent, $15$ would be a square in $\Q_2$, but $15 \not \equiv 1 \pmod{8}$.
\end{proof}

\subsubsection*{The extension group $\Ext_{\Z[\frac{1}{3}]}^{1}(\mu_{2},\Z/2\Z)$}

We show that $\Ext_{\Z[\frac{1}{3}]}^{1}(\mu_{2},\Z/2\Z)$ is trivial.
Let $K=\Q$, $p=2$ and $S=\{3\}$.
It suffices to show that the homomorphism
\begin{align} \label{eqn:extmor1}
\Z[\frac{1}{6}]^*/\Z[\frac{1}{6}]^{*2} \longrightarrow \Q_2^*/\Q_2^{*2}
\end{align}
is injective.
The non-squares in $\Z[\frac{1}{6}]^*$ are generated by $2,3$ and $-1$.
By Lemma \ref{lem:padic_unit_comp}, the non-squares in $\Q_2^*$
are generated by $2,3$ and $5$.
Hence the homomorphism in \eqref{eqn:extmor1} is injective.

\subsubsection*{The extension group $\Ext^1_{\Z[\frac{1}{2},i]}(\mu_3,\Z/3\Z)$}

Let $K=\Q(i)$, $p=3$ and $S= \{ (1+i) \}$.
Hence $O_S = \Z[i,\frac{1}{2}]$.
The group $\Gamma$ is the Galois group of the extension $\Q(\zeta_{12})/\Q(i)$
and has order $4$.
The cyclotomic character $\omega$ at $3$ is quadratic, so $\omega^2$ is trivial.
The Hilbert class field of $\Q(\zeta_{12})$ is trivial.
We will show that the group $\Ext^1_{\Z[\frac{1}{2},i]}(\mu_3,\Z/3\Z)$ is trivial.
It suffices to show that 
$$
\left( \Z[\zeta_{12},\frac{1}{6}]^*/\Z[\zeta_{12},\frac{1}{6}]^{*3} \right)^{\Gamma}
\longrightarrow \left( \Q_3(\zeta_{12})^*/\Q_3(\zeta_{12})^{*3} \right)^{\Gamma} 
$$
is injective.

Let $F_1$ be the functor from the category of $\Z[G_\Q]$-modules to 
the category of $\Z[\Gamma]$-modules defined by taking $\Gal(\Alg{\Q}/\Q(\zeta_{12}))$-invariants.
The functor $F_1$ sends injective objects to acyclic ones.
Similarly, let $F_2$ be the functor of taking $\Gamma$-invariants
from the category of $\Z[\Gamma]$-modules to 
the category of abelian groups.
We apply Theorem \ref{thm:spectral_low_degree} with the two functors $F_1$ and $F_2$
described above, 
and we take the object $A$ of Theorem \ref{thm:spectral_low_degree} to be the $G_{\Q(i)}$-module $\mu_3$.
Since the order of $\Gamma$ is coprime with the order of $\mu_3$, the derived functors of $F_2$ are zero.
From the long exact sequence of Theorem \ref{thm:spectral_low_degree} we see that
$$
\left( \Z[\zeta_{12},\frac{1}{6}]^*/\Z[\zeta_{12},\frac{1}{6}]^{*3} \right)^{\Gamma} \simeq \Z[\frac{1}{6}]^*/\Z[\frac{1}{6}]^{*3} 
$$
and that
$$
\left( \Q_3(\zeta_{12})^*/\Q_3(\zeta_{12})^{*3} \right)^{\Gamma} \simeq \Q_3^*/\Q_3^{*3} .
$$
We proceed as in the previous example.
%
%

\subsubsection*{The extension group $\Ext^1_{\Z[\frac{1}{7}]}(\mu_2,\Z/2\Z)$}

Let $K=\Q$, $p=2$ and $S= \{ 7 \}$.
Note that $-7$ is a $2$-adic square. Hence the kernel of
$$
\Z[\frac{1}{14}]^*/\Z[\frac{1}{14}]^{*2} \longrightarrow \Q_2^*/\Q_2^{*2} 
$$
is non-trivial and of order $2$. A non-trivial extension of $\mu_2$ by $\Z/2\Z$ over $\Z[\frac{1}{7}]$
is generically isomorphic to the extension $T(-7)$ of $\Z/2\Z$ by $\mu_2$.
However, this extension is locally at $2$ a trivial extension. 
The Hopf algebra of such a non-trivial extension is given by
$$
\Z[\frac{1}{7}][X,Y]/(X^2-X - Y,Y^2+2Y)
$$
with coalgebra maps $\Delta$ (comultiplication), $\epsilon$ (counit) and $S$ (coinverse):
\begin{align*}
\Delta(X)  & = X \otimes 1 + 1 \otimes X - 2 X \otimes X + \frac{1}{7} Y \otimes Y 
 - \frac{2}{7}( Y \otimes XY + XY \otimes Y) + \frac{4}{7} ( XY \otimes XY) \\
\Delta(Y) & = Y \otimes 1 + 1 \otimes Y + Y \otimes Y  \\
\epsilon(X) & = 0 , \quad \epsilon(Y) = 0 \\
S(X) & = -X , \quad S(Y) = Y .
\end{align*}
This group scheme is isomorphic to the $2$-torsion subgroup scheme of the elliptic curve $J_0(49)$.

\bibliography{index}

\begin{thebibliography}{Maz77}

\bibitem[KM85]{KatzMazur:1985}
Nicholas~M. Katz and Barry Mazur.
\newblock {\em Arithmetic moduli of elliptic curves}, volume 108 of {\em Annals
  of Mathematics Studies}.
\newblock Princeton University Press, Princeton, NJ, 1985.

\bibitem[Maz77]{Mazur:1977}
B.~Mazur.
\newblock Modular curves and the {E}isenstein ideal.
\newblock {\em Inst. Hautes {\'E}tudes Sci. Publ. Math.}, (47):33--186 (1978),
  1977.

\bibitem[Sch03]{Schoof:2003}
Ren{{\'e}} Schoof.
\newblock Abelian varieties over cyclotomic fields with good reduction
  everywhere.
\newblock {\em Math. Ann.}, 325(3):413--448, 2003.

\bibitem[Sch09]{Schoof:2009}
Ren{{\'e}} Schoof.
\newblock Semistable abelian varieties with good reduction outside 15, 2009.
\newblock preprint.

\bibitem[Wei94]{Weibel:1994}
Charles~A. Weibel.
\newblock {\em An introduction to homological algebra}, volume~38 of {\em
  Cambridge Studies in Advanced Mathematics}.
\newblock Cambridge University Press, Cambridge, 1994.

\end{thebibliography}
\bibliographystyle{alpha}
\end{document}